\setlist[enumerate]{leftmargin=1.2em}
\setlist[itemize]{leftmargin=1.2em}
\definecolor{green}{rgb}{0,0.8,0} 
\newtheorem{theorem}{Theorem}[section]
\newtheorem{lemma}[theorem]{Lemma}
\newtheorem{proposition}[theorem]{Proposition}
\theoremstyle{definition}
\theoremstyle{remark}
\newtheorem{remark}[theorem]{Remark}
\numberwithin{equation}{section}
\newcommand{\nrm}[1]{\Vert#1\Vert}
\newcommand{\nnrm}[1]{{\vert\kern-0.25ex\vert\kern-0.25ex\vert #1 
		\vert\kern-0.25ex\vert\kern-0.25ex\vert}}
\newcommand{\dist}{\mathrm{dist}\,}
\newcommand{\lap}{\Delta}
\newcommand{\ud}{\mathrm{d}}
\newcommand{\rd}{\partial}
\newcommand{\nb}{\nabla}
\newcommand{\alp}{\alpha}
\newcommand{\gmm}{\gamma}
\newcommand{\dlt}{\delta}
\newcommand{\eps}{\epsilon}
\newcommand{\tht}{\theta}
\newcommand{\omg}{\omega}
\newcommand{\Omg}{\Omega}
\newcommand{\bbN}{\mathbb N}
\newcommand{\bbR}{\mathbb R}
\newcommand{\bbT}{\mathbb T}
\newcommand{\bbZ}{\mathbb Z}
\newcommand{\sd}{\triangle}
\begin{document}
\bibliographystyle{plain}
 \title{Growth of perimeter for vortex patches in a bulk}
 
 \author{Kyudong Choi}
 \address{UNIST, Ulsan, Korea 44919}
 \email{kchoi@unist.ac.kr}
 
 \author{In-Jee Jeong}%
 \address{KIAS, Seoul, Korea 02455}%
 \email{ijeong@kias.re.kr}%
   
\date\today
 
  \maketitle

\renewcommand{\thefootnote}{\fnsymbol{footnote}}
\footnotetext{\emph{Key words:} 2D Euler, vortex patch, stability, perimeter,  large time behavior, particle trajectory.
\quad\emph{2010 AMS Mathematics Subject Classification:} 76B47, 35Q35 }
\renewcommand{\thefootnote}{\arabic{footnote}}



\begin{abstract}
We consider the two-dimensional incompressible  Euler equations. 
We construct vortex patches with smooth boundary on $\bbT^2$ and $\bbR^2$ whose perimeter grows with time. 
More precisely, for any constant $M>0$, we construct a vortex patch in $\bbT^2$ whose  smooth boundary has length of order $1$ at the initial time such that   the perimeter grows  up to the given constant $M$ within finite time. The construction is done by
cutting a thin stick out of an almost square patch.  A similar result holds   for an almost round patch with a thin handle in  $\bbR^2$.

\end{abstract}

\section{Introduction}
\noindent We consider the  incompressible Euler equation in vorticity form: \begin{equation}\label{eq:Euler}
\begin{split}
\rd_t\omg + u\cdot\nb\omg = 0,\quad u=\nb^\perp\lap^{-1}\omg, 
\end{split}
\end{equation} where $\nb^\perp = (-\rd_{x_2},\rd_{x_1})$. We take the physical domain to be either $\mathbb{T}^2 = (\bbR/2\pi\bbZ)^2$ or $\bbR^2$. In the former case, we need to assume that $\int_{\mathbb{T}^2}\omega_0\,dx=0$ holds, which propagates in time. At each moment of time, the velocity $u(t)$ can be obtained from the vorticity $\omg(t, \cdot )$ via the Biot-Savart law: \begin{equation*}
\begin{split}
u(t,x) = \int_{\bbR^2} K(x-y)\omega(t,y) \,  \ud y , \quad K(x) =\frac{1}{2\pi} \frac{x^\perp}{|x|^2}. 
\end{split}
\end{equation*} The above formula holds in the case of $\bbT^2$ as well, by periodically extending $\omega(t)$ to all of $\bbR^2$.

For $\omg_0\in L^\infty(\bbT^2)$ or $L^\infty\cap L^1(\bbR^2)$, the well-known result of Yudovich (\cite{Y1}) shows that there exists a unique and global-in-time solution to \eqref{eq:Euler}. In particular, when the initial data takes the form  $\omg_0 = \sum_{i=1}^{n} a_i {\mathbf{1}}_{\Omega_0^i}$  for open sets   $\Omg_0^i$  with disjoint closures and constants $a_i\in\bbR$, it can be shown that the unique solution is given by  $\omg(t)= \sum_{i=1}^{n} a_i {\mathbf{1}}_{\Omega_t^i}$  where $\Omg_t^i$ again has disjoint closures with each other for any $t>0$. Solutions of this form are commonly referred to as vortex patches. Introducing the flow $\Phi(t,x)$ satisfying\begin{equation*}
\begin{split}
\frac{d}{dt}\Phi(t,x)=u(t,\Phi(t,x)),\quad \Phi(0,x)=x,
\end{split}
\end{equation*} we have that $\Omg_t^i = \Phi(t,\Omg_0^i)$ for each $1\le i \le n$. Given $x$, the curve $t\mapsto \Phi(t,x)$ describes the trajectory of the ``fluid particle'' initially located at $x$. 

It is known that if initially the patch boundary is smooth, then the smoothness propagates for all time (\cite{C3,BeCo,S2}). The estimates give that the perimeter of the patch could grow at most double exponentially in time (\cite{BeCo,NK}). When one considers patch solutions in domains with a boundary, linear in time growth of the perimeter can be easily achieved for certain initial data. For instance, inside an annulus one can consider a patch which connects two boundary components. It can be shown that for all times, the inner and outer boundaries rotate with different angular velocities (\cite{Nad}), which in particular stretches the patch boundary at least linearly in time. A similar effect can be obtained if one sets up two patches of different sign to touch each other (\cite{SVP2}). When boundaries are absent and patches are separated from each other, it seems like a difficult question to show growth of perimeter in time. In this paper, we prove that for any $M>0$, there exists a patch having smooth boundary and uniformly bounded perimeter in $\bbT^2$ and $\bbR^2$ so that the perimeter becomes larger than $M$ at a later time. Our constructions in $\bbT^2$ and $\bbR^2$ are based on stability results for the Bahouri-Chemin steady state (\cite{BC}) and the circular patch (\cite{wp,SiVe}), respectively. The latter construction partially confirms the well known phenomena called \textit{filamentation}--nearly circular patches forming ``long arms'' (\cite{CS,Drit,PuMo}). We note that the Bahouri-Chemin solution has been very useful in proving small scale creation in the 2D Euler equation (\cite{Den2,Den3,KS,EJ,Xu,Z,KiLi}). 
To prove growth of the perimeter, the main task is to measure the length of a curve in a bulk given by Euler evolution at a certain time; this is much harder than just measuring the length of the trajectory of a particle up to a given time. 
\\

\subsection{Main result}  

We now state our main results, which are sufficient conditions for an initial patch to guarantee at least linear growth of the perimeter up to some finite but arbitrarily large time. 
We denote the origin by $x_o = (0,0)$ and the center of the square $[0,\pi]^2$ by  $x_c = (\frac{\pi}{2},\frac{\pi}{2})$.
Moreover, given $x$ and $r>0$, we use $B(x,r)$ to denote the open ball of radius $r$ centered at $x$. 
In the $\bbT^2$ case, we shall always assume that $\omg$ is odd with respect to $x_1$ and $x_2$,\footnote{The 2D Euler equation preserves odd symmetry; if $\omg_0$ satisfies $\omg_0(x_1,x_2)=-\omg_0(-x_1,x_2)$ and $\omg_0(x_1,x_2)=-\omg_0(x_1,-x_2)$ for all $x_1,x_2\in\bbT$, the solution $\omg(t)$ again satisfies  $\omg(t,x_1,x_2)=-\omg(t,-x_1,x_2)$ and $\omg(t,x_1,x_2)=-\omg(t,x_1,-x_2)$ for all $t>0$.} and we shall specify the data and the solution only on $[0,\pi]^2$. 

Before presenting the main result, let us define the notion of the winding number around the point $x_c$ for any continuous curve $\gmm:[0,1]\rightarrow (0,\pi)^2\backslash\{x_c\}\subset (0,\infty)\times (\bbR/2\pi\bbZ)$. The last inclusion is provided by the polar coordinates $(r,\tht)$ centered at $x_c$: $r$ and $\tht$ are defined by $r = |x-x_c|$ and $\tan(\tht(x))= \frac{x_2-x_{c,2}}{x_1-x_{c,1}}$, respectively. Any such curve can be lifted to a continuous curve $\tilde{\gmm}=(\tilde{\gmm_1},\tilde{\gmm_2}):[0,1]\rightarrow (0,\infty)\times \bbR$  where $\Pi \circ \tilde{\gmm} = \gmm$ with $\Pi:\bbR\rightarrow\bbR/2\pi\bbZ$ being a natural projection. Then we define the winding number $N_{x_c}[\gmm]$ by
\begin{equation}\label{defn_wind}
N_{x_c}[\gmm]=\frac 1 {2\pi} \left(\tilde{\gmm_2}(1) -\tilde{\gmm_2}(0)\right),
\end{equation} i.e.
   the difference of the ``angular'' component of $\tilde{\gmm}(1)$ and $\tilde{\gmm}(0)$ multiplied by $(2\pi)^{-1}$
(cf. see (\cite{CJ_winding, Choi_distance}) for  particle trajectories).     Note that if the image of $\gmm$ is contained in $ (r_0,\infty)\times (\bbR/2\pi\bbZ)$ (in the polar coordinates) for some $r_0>0$, then 
\begin{equation}\label{result_wind}
 \mathrm{length}(\gmm)\ge 2\pi r_0 \cdot|N_{x_c}[\gmm]|.
\end{equation}

Our main result in the $\bbT^2$ case roughly states that if the initial patch fills up most of $(0,\pi)^2$ but does not contain some region near $x_c$, then growth of the perimeter must occur.
\begin{theorem}[The torus case]\label{thm_finite}
	For any small $\epsilon>0$ and for any open set $\Omg_0$ compactly contained in $(0,\pi)^2$ 
 with	its smooth boundary $\partial\Omega$ 
	satisfying \begin{itemize}
		\item $\mathrm{area}([0,\pi]^2\backslash \Omg_0)\le\eps^2$, and 
		\item there exists a continuous curve $\gamma$ lying on  $\partial\Omg_0$, intersecting $B(x_o,\widetilde\eps)$ and $B(x_c,\dlt)$ where $\widetilde{\eps} = (c_2\dlt)^{\exp((c_3\eps)^{-1})}$, and satisfying
		$$|N_{x_c}[\gamma]|\leq 1,$$
	\end{itemize}   the solution $\mathbf{1}_{\Omg_t}$ to \eqref{eq:Euler} with initial data $\mathbf{1}_{\Omg_0}$ satisfies \begin{equation*}
\begin{split}
\mathrm{length}( \partial \Omg_t ) \ge c_0 t \quad \mbox{ for all } \quad t \in [0,\frac{c_1}{\eps}].
\end{split}
\end{equation*} Here, $\dlt, c_0, c_1, c_2$ and $c_3$ are small positive constants which do not depend on $\eps$. 
\end{theorem}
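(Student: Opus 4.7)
\medskip
\noindent\textbf{Proof plan.}

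The strategy is to exploit the Bahouri--Chemin (BC) steady state $\omg_{BC}=\mathbf{1}_{[0,\pi]^2}$ (extended by odd reflection) as a background flow and to propagate its dynamical features to $\omg(t)=\mathbf{1}_{\Omg_t}$ on the time window $[0,c_1/\eps]$. Since $|[0,\pi]^2\setminus\Omg_0|\leq\eps^2$, area-type stability of the BC patch forces $\omg(t)$ to remain $L^1$-close to $\omg_{BC}$ throughout this window, hence $u(t)=\nb^\perp\Dlt^{-1}\omg(t)$ is $L^\infty$-close to $u_{BC}$ on compact subsets of $(0,\pi)^2$ away from $x_o$. I then track the two endpoints of $\gmm$ separately.

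For $p_o\in\gmm\cap B(x_o,\tld\eps)$, the log-Lipschitz envelope $|u_{BC}(x)|\aleq|x-x_o|\log(1/|x-x_o|)$ transfers to $u(t)$, and integration of the resulting ODE yields the double-exponential bound $|\Phi(t,p_o)-x_o|\leq|p_o-x_o|^{\exp(-Ct)}$. The hypothesis $\tld\eps=(c_2\dlt)^{\exp((c_3\eps)^{-1})}$ is tuned precisely so that $|\Phi(t,p_o)-x_o|\leq\dlt/2$ throughout $[0,c_1/\eps]$, hence the angular coordinate of $\Phi(t,p_o)$ around $x_c$ drifts by only $O(1)$. At the other end, $\Dlt\psi_{BC}=1$ in $(0,\pi)^2$ combined with the four-fold symmetry about $x_c$ (which kills all off-diagonal and cubic contributions) forces $\mathrm{Hess}\,\psi_{BC}(x_c)=\tfrac12 I$ and $u_{BC}(x_c+y)=\tfrac12 y^\perp+O(|y|^3)$, so BC orbits near $x_c$ are closed curves with angular speed $\Omg_{BC}(r)=\tfrac12+O(r^2)$. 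Velocity closeness preserves this elliptic picture quantitatively on $[0,c_1/\eps]$, so a particle starting at distance $\sim\dlt$ from $x_c$ remains in an annulus $\{\dlt/8\leq|x-x_c|\leq 2\dlt\}$ and winds around $x_c$ by angle $\approx t/2$.

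Fix $r_0=\dlt/2$, set $s_0(t)=\inf\{s\in[0,1]:|\gmm_t(s)-x_c|\leq r_0\}$, and denote by $\eta_t$ the restriction of $\gmm_t$ to $[0,s_0(t)]$, which by construction lies in $\{|x-x_c|\geq r_0\}$. Because BC exactly preserves circles around $x_c$ and the perturbation only slightly shifts $s_0(t)$, the tip $\eta_t(s_0(t))$ lies on the circle $|x-x_c|=r_0$ and rotates by $\approx t/2$. Combined with the slow angular motion of $\Phi(t,p_o)$ and the initial hypothesis $|N_{x_c}[\gmm]|\leq 1$, one obtains
\begin{equation*}
|N_{x_c}[\eta_t]| \;\geq\; \frac{t}{4\pi}-O(1),
\end{equation*}
and the winding--length inequality \eqref{result_wind} applied to $\eta_t$ then gives
\begin{equation*}
\mathrm{length}(\partial\Omg_t)\;\geq\;\mathrm{length}(\eta_t)\;\geq\;2\pi r_0\,|N_{x_c}[\eta_t]|\;\geq\;c_0 t
\end{equation*}
for $c_0>0$ depending only on $\dlt$.

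The principal technical challenge is converting the area-type closeness into pointwise trajectory control on the long scale $[0,c_1/\eps]$, especially near the elliptic point $x_c$. The discrepancy $|u(t)-u_{BC}|$ in the bulk couples the global shape of $\Omg_t$ to the local rotational dynamics at $x_c$: one must use the odd symmetries and the logarithmic smoothing of Biot--Savart to show that the net radial drift along $x_c$-orbits is small uniformly in $\eps$, so that the rigid-rotation picture survives up to time $c_1/\eps$ and $s_0(t)$ is indeed close to $s_0(0)$. The near-$x_o$ endpoint is delicate for a complementary reason: log-Lipschitz already produces double-exponential expansion under unperturbed BC, and one must verify that the perturbation does not worsen the exponent, so that the prescribed $\tld\eps$ still suffices.
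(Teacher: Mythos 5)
Your proposal follows essentially the same route as the paper: Bahouri--Chemin as background, the $L^1$--$L^\infty$ interpolation bound $\|u(t)-u^{BC}\|_{L^\infty}\lesssim\eps$ from area conservation, double-exponential trajectory confinement of the endpoint near the hyperbolic corner $x_o$ (which fixes the size of $\widetilde\eps$), elliptic rotation at rate $\approx\tfrac12$ near $x_c$ from the four-fold symmetry and $\Delta\psi^{BC}=1$, and finally the winding-number/length inequality \eqref{result_wind}.

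The one step that does not quite work as written is the truncation at $s_0(t)=\inf\{s:|\gmm_t(s)-x_c|\le r_0\}$. Since $s_0(t)$ is defined from the evolved curve, the tip $\eta_t(s_0(t))$ is not a material point, so the winding identity \eqref{iden_wind} (which integrates the angular velocity along particle trajectories of the two endpoints) does not apply to $\eta_t$, and your assertion that this tip ``rotates by $\approx t/2$'' because ``the perturbation only slightly shifts $s_0(t)$'' is unjustified. The paper sidesteps this by truncating the \emph{initial} curve: it chooses $\gmm_0$ lying in $(0,\pi)^2\setminus B(x_c,\dlt)$ with material endpoint $x_c^*=\gmm_0(1)$ at distance exactly $\dlt$ from $x_c$, then uses the approximate conservation of $\psi^{BC}$ along trajectories (Lemma \ref{cor:stab}) to show the whole evolved curve stays outside $B(x_c,\dlt/2)$ while $x_c^*$ remains trapped in the annulus $\dlt/2\le|x-x_c|\le 3\dlt/2$; the winding is then computed from the two fixed material endpoints. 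Adopting that material truncation repairs your argument with no other changes.
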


In the $\bbR^2$ case,  for any continuous curve $\gamma:[0,1]\to \mathbb{R}^2\setminus\{x_0\}$, 
we can similarly define the winding number $N_{x_0}[\gmm]$ around the origin $x_0$  as in \eqref{defn_wind}. Then, 
we have growth of the perimeter for patches which are nearly circular in the $L^1$ sense but are with a ``handle".
\begin{theorem}[The whole space case]\label{thm_finite_whole}
	For any  small $\epsilon>0$
and	
	for any open set $\Omg_0\subset \bbR^2$ 
 with	its smooth boundary $\partial\Omega$ 	
	satisfying 
	\begin{itemize}
		\item $\Omg_0\subset B(x_o,3)$ and $\mathrm{area}(\Omg_0 \sd B(x_o,1))\le
c_1\epsilon^4$,		
		and 
		\item there exists 
a continuous curve $\gamma$ lying on 		
		 $\partial\Omg_0$, intersecting $\{|x|=i\}$ for $i=1,2,$
		 and satisfying
		 	$$|N_{x_o}[\gamma]|\leq 1,$$
	\end{itemize}
	the solution $\omg_t=\mathbf{1}_{\Omg_t}$  to \eqref{eq:Euler} with initial data $ {\omg_0}=\mathbf{1}_{\Omg_0}$ satisfies
	\begin{equation*}
	\begin{split}
	\mathrm{length}( \partial \Omg_t ) \ge c_0 t \quad \mbox{ for all } \quad t \in [0,\frac{ 1}{\eps}].
	\end{split}
	\end{equation*} Here, $c_0$ and $c_1$  are  small positive constants which does not depend on $\eps$. 
\end{theorem}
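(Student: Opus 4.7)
The plan is to exploit the $L^1$-stability of the unit disk as a steady patch for $2$D Euler, combined with the pure differential rotation profile generated by the disk: particles at radius $r$ rotate with angular speed $\tfrac{1}{2}\min(1,r^{-2})$, so a ``handle'' of the patch connecting the unit circle to the circle of radius $2$ is sheared at a definite rate. The winding number of the handle around $x_o$ then grows linearly in $t$, and the winding-number--length inequality \eqref{result_wind} converts this into linear growth of $\mathrm{length}(\rd\Omg_t)$.

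First I would invoke the $L^1$-stability of the circular patch (Wan--Pulvirenti \cite{wp}, Sideris--Vega \cite{SiVe}): from $\mathrm{area}(\Omg_0 \sd B(x_o,1)) \le c_1\eps^4$ one obtains $\mathrm{area}(\Omg_t \sd B(x_o,1)) \aleq \eps^2$ for all $t>0$. Combining this with the trivial bound $\|\omg(t) - \mathbf{1}_{B(x_o,1)}\|_{L^\infty} \le 2$ and the standard interpolation $\|K \ast f\|_{L^\infty} \aleq \|f\|_{L^1}^{1/2}\|f\|_{L^\infty}^{1/2}$ for the Biot--Savart kernel $K$, this yields
\begin{equation*}
\|u(t) - u_\ast\|_{L^\infty(\bbR^2)} \aleq \eps,
\end{equation*}
where $u_\ast(x) = x^\perp/(2\max(1,|x|^2))$ is the steady velocity field of the unit disk patch.

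Next, pick $p_1,p_2 \in \gmm$ with $|p_1|=1$ and $|p_2|=2$ and track the trajectories $\Phi(t,p_i)=(R_i(t),\Theta_i(t))$ in polar coordinates. Since the disk flow preserves $R$ exactly, the above velocity estimate gives $|R_i(t) - |p_i|| \aleq \eps t$; choosing $c_1$ small enough, we keep $R_1(t) \in [3/4,5/4]$ and $R_2(t) \in [7/4,9/4]$ on $[0,c_1/\eps]$. On this range the disk angular velocity $\dot\tht_\ast(r) = \tfrac{1}{2}\min(1,r^{-2})$ satisfies $\dot\tht_\ast(R_1) - \dot\tht_\ast(R_2) \ge 2c_\ast$ for an absolute constant $c_\ast > 0$; transferring back to the true angular velocities costs only an $O(\eps)$ error, so $\dot\Theta_1(t) - \dot\Theta_2(t) \ge c_\ast$. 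The standard identity
\begin{equation*}
N_{x_o}[\Phi(t,\gmm)] - N_{x_o}[\gmm] = \frac{1}{2\pi}\int_0^t(\dot\Theta_2(s)-\dot\Theta_1(s))\,\ud s,
\end{equation*}
valid as long as the deformed curve avoids $x_o$, together with $|N_{x_o}[\gmm]| \le 1$, then produces $|N_{x_o}[\Phi(t,\gmm)]| \ageq t$ once $t$ exceeds an absolute constant. Applying \eqref{result_wind} with $r_0 = 1/2$ and the fact that the patch boundary is transported by the flow ($\Phi(t,\gmm) \sbeq \rd\Omg_t$) yields $\mathrm{length}(\rd\Omg_t) \ge c_0 t$, with the remaining short-time range absorbed by shrinking $c_0$.

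The main obstacle is the proviso used in the previous paragraph: showing that $\Phi(t,\gmm)$ stays outside a fixed ball $B(x_o, r_0)$ throughout $[0,c_1/\eps]$, so that $N_{x_o}[\Phi(t,\gmm)]$ remains well-defined and \eqref{result_wind} applies. The $L^1$ stability alone does not rule out thin filaments of $\rd\Omg_t$ reaching arbitrarily close to $x_o$. The resolution is to first restrict $\gmm$ to a sub-arc beginning on $\{|x|=1\}$ and lying initially in $\{|x|\ge 1/2\}$, and then to argue that on the time scale $t \le c_1/\eps$ the level sets of the perturbed stream function stay uniformly close to circles, preventing any trajectory on this sub-arc from drifting into $B(x_o,1/4)$. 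Once this trajectory-confinement statement is in place, the steps above close the proof.
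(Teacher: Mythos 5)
Your proposal follows essentially the same route as the paper: $L^1$ stability of the disk patch plus Biot--Savart interpolation gives $\|u(t)-u_D\|_{L^\infty}\aleq \eps$, the differential rotation between radii $\approx 1$ and $\approx 2$ forces linear growth of the winding number via \eqref{iden_wind}, and \eqref{result_wind} converts this into length growth. The one step you leave open -- confining $\Phi(t,\gmm)$ away from $x_o$ so that the winding number makes sense -- is resolved much more simply than your proposed stream-function/level-set argument: since $u_{D,rad}\equiv 0$, the velocity estimate already gives $|u_{rad}(t,x)|\aleq\eps$ \emph{uniformly in $x$}, so after restricting $\gmm$ to a sub-arc contained in the closed annulus $\{1\le |x|\le 2\}$ (with endpoints on the two circles), every point of that sub-arc drifts radially by at most $1/4$ on $[0,c/\eps]$ and the whole deformed arc stays in $\{|x|\ge 3/4\}$; this is exactly the same computation you already carry out for the two endpoints $p_1,p_2$, applied to all points of the arc at once.
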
  

\begin{remark}
	It is not difficult to see that for any given $\eps>0$, there exists  $\Omg_0$ satisfying the assumptions of Theorem \ref{thm_finite} and \ref{thm_finite_whole} with 
	 uniformly bounded perimeter, say $\mathrm{length}( \partial \Omg_0 ) \le 20$. For instance, see Figure  \ref{figs}.  
	

\begin{figure}[h]

\begin{subfigure}{0.45\textwidth}
\includegraphics[scale=0.52]{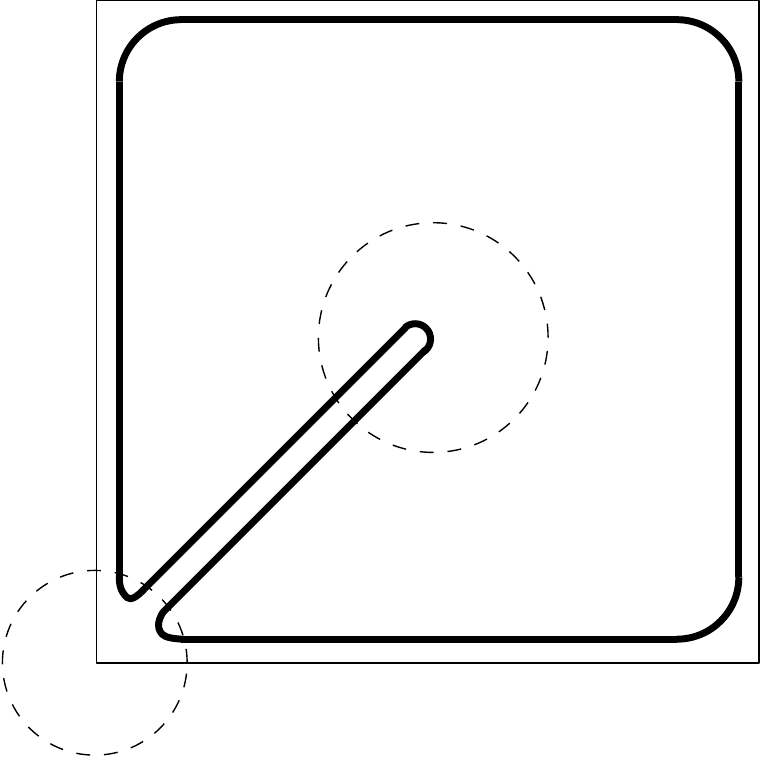} 
\caption{
$\mathbb{T}^2$-case, 
 depicted on $[0,\pi]^2$. The dashed circles represent $B(x_o,\widetilde{\eps})$ and $B(x_c,\dlt)$.}
\label{fig:torus}
\end{subfigure}
\begin{subfigure}{0.45\textwidth}
\includegraphics[scale=0.4]{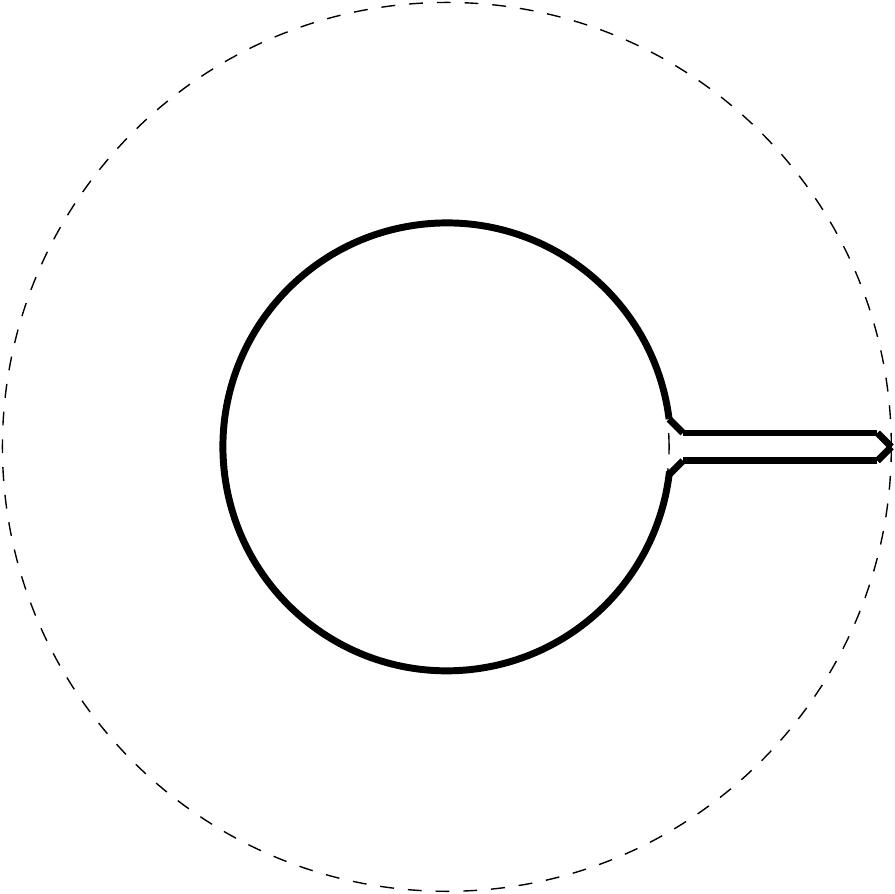}
\caption{
$\mathbb{R}^2$-case.
The dashed circle represents $B(x_o,2)$.}
\label{fig:whole}
\end{subfigure}
\caption{Examples of patches satisfying the assumptions of Theorems \ref{thm_finite}, \ref{thm_finite_whole}}
\label{figs}
\end{figure}
\end{remark}

\begin{remark}
	While infinite-time linear perimeter growth should be achievable if $\overline{\partial\Omg_0}$ is allowed to intersect with $\partial([0,\pi]^2)$ in the setup of Theorem \ref{thm_finite}, it seems like a challenging problem to lift such an additional condition. 
\end{remark}
We prove Theorems \ref{thm_finite} and \ref{thm_finite_whole} respectively in Sections \ref{sec:T2} and \ref{sec:R2}. In both cases, the strategy is to use stability properties of the background solution (which is the Bahouri-Chemin solution and the circular vortex patch, respectively) and arrange two parts of the patch to rotate with different angular speeds, which results in an increase of the winding number of the patch boundary. As a consequence of \eqref{result_wind},  the length of the boundary grows in time. \\

In the proof, when $\gmm_t=\Phi(t,\gamma_0)$, the increment (in time) of the winding number of the curve $\gmm_t$ around a fixed point $x$ is accessed by the following way: \\
   \begin{equation}\label{iden_wind}
   \begin{split}
{2\pi}(N_{x}[\gmm_t]-N_{x}[\gmm_0])&=
  \left({\tilde\gmm_{t,2}}(1) -{\tilde\gmm_{t,2}}(0)\right)-
  \left({\tilde\gmm_{0,2}}(1) -{\tilde\gmm_{0,2}}(0)\right)=
  \left({\tilde\gmm_{t,2}}(1) -{\tilde\gmm_{0,2}}(1)\right)-
  \left({\tilde\gmm_{t,2}}(0) -{\tilde\gmm_{0,2}}(0)\right)\\
&= \left( \int_0^t u(s,\gmm_s(1))\cdot \frac{(\gmm_s(1)-x)^\perp}{|\gmm_s(1)-x|^2}ds-\int_0^t u(s,\gmm_s(0))\cdot \frac{(\gmm_s(0)-x)^\perp}{|\gmm_s(0)-x|^2}ds\right).
\end{split}
\end{equation}  
	Each integral in \eqref{iden_wind} represents the winding number (in time) around $x$ of each particle trajectory $\gmm_t(1), \gmm_t(0)$ up to time $t$  (e.g. see \cite{CJ_winding}).

\section{The torus case}\label{sec:T2}

\subsection{Bahouri-Chemin stationary solution}\label{subsec_bc}

The Bahouri-Chemin steady state is given by (\cite{BC}) \begin{equation*}
\begin{split}
\omg^{BC}(x_1,x_2) = \mathrm{sgn}(x_1)\mathrm{sgn}(x_2), \quad -\pi<x_1,x_2\le \pi. 
\end{split}
\end{equation*} Note that $\omg^{BC}$ is odd in both variables. This defines a stationary solution to \eqref{eq:Euler} in $\bbT^2$, simply because the particle trajectories cannot cross the separatrices, to preserve odd symmetry. Introducing the stream function $\psi^{BC}$ defined by \begin{equation*}
\begin{split}
\lap\psi^{BC} = \omg^{BC}\quad  &\mbox{in} \quad (0,\pi)^2 \\
\psi^{BC} = 0  \quad &\mbox{on} \quad \partial([0,\pi]^2),
\end{split}
\end{equation*} the velocity is given by $u^{BC} = \nb^\perp \psi^{BC}$ and we have that the particle trajectories lie on level sets of $\psi^{BC}$, which are closed curves foliating the torus. As a warm-up, we 
construct an example of a curve whose length grows linearly with time under the Bahouri-Chemin flow.
 The properties of $\psi^{BC}$ established here will be useful in the proof of our main result. 
\begin{proposition}\label{prop:BC}
	Let $\gmm:[0,1]\rightarrow [0,\pi]^2$ be the   line segment 
	$$\gamma(t)=(1-t)x_0+tx_c,\quad t\in[0,1].$$
	 Let $\Phi^{BC}:[0,\infty) \times [0,\pi]^2 \rightarrow [0,\pi]^2$ be the flow map generated by $\omg^{BC}$ restricted to $[0,\pi]^2$. Then, \begin{equation*}
	\begin{split}
	\mathrm{length}(\Phi^{BC}(t,\gmm)) \ge c_0 t
	\end{split}
	\end{equation*} for all $t>0$, with some absolute constant $c_0>0$. 
\end{proposition}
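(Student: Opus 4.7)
The plan is to exploit the Hamiltonian structure of the Bahouri-Chemin flow. Since $u^{BC}=\nb^\perp\psi^{BC}$ with $\lap\psi^{BC}=1$ on $(0,\pi)^2$ and $\psi^{BC}=0$ on $\rd([0,\pi]^2)$, particles travel along level sets of $\psi^{BC}$. By symmetry and the maximum principle, $\psi^{BC}$ has a unique interior critical point---a nondegenerate minimum at $x_c$ with expansion $\psi^{BC}(x)=\psi_c+\tfrac14|x-x_c|^2+O(|x-x_c|^3)$, so the linearized flow at $x_c$ is counterclockwise rotation of period $T_c=4\pi$. At the corner $x_o$, the boundary conditions force $\psi^{BC}(x)\approx\tfrac12 x_1 x_2$, giving a hyperbolic saddle with the boundary $\rd([0,\pi]^2)$ as separatrix. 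Denoting by $T(P)$ the period of the closed orbit of $\Phi^{BC}$ through $P\in(0,\pi)^2\setminus\{x_c\}$, these two local pictures yield $T(P)\to T_c$ as $P\to x_c$ and the classical logarithmic divergence $T(P)\to\infty$ as $P\to x_o$.

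I would then exploit this gap in angular speeds. Pick $P_2=\gmm(s_2)$ close enough to $x_c$ that $T(P_2)\le 5\pi$ and the orbit $L_{P_2}$ through $P_2$ lies in the annulus $\{r/2\le|x-x_c|\le 2r\}$ for $r:=|P_2-x_c|$; pick $P_1=\gmm(s_1)$ close enough to $x_o$ that $T(P_1)\ge 100\pi$, and let $\gmm_{sub}:=\gmm|_{[s_1,s_2]}$ and $\gmm_t:=\Phi^{BC}(t,\gmm_{sub})$. Each orbit is traversed counterclockwise around $x_c$ (forced by $x_c$ being a minimum of $\psi^{BC}$), so the lifted angular coordinate of the particle $\Phi^{BC}(\cdot,P_i)$ around $x_c$ increases by $2\pi t/T(P_i)$ up to an $O(1)$ residual from the last partial revolution. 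The identity \eqref{iden_wind} applied with $x=x_c$, together with $|N_{x_c}[\gmm_{sub}]|<1$, then gives
\begin{equation*}
|N_{x_c}[\gmm_t]|\ge\Bigl(\frac{1}{T(P_2)}-\frac{1}{T(P_1)}\Bigr)t-O(1)\ge\frac{t}{6\pi}-O(1).
\end{equation*}

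To convert this winding bound into a length bound via \eqref{result_wind}, I need $\gmm_t$ to stay bounded away from $x_c$. Here I use that $\psi^{BC}$ is strictly decreasing along the diagonal from $x_o$ to $x_c$: restricted to the diagonal, the derivative of $a\mapsto\psi^{BC}(a,a)$ equals $2\rd_1\psi^{BC}(a,a)$ (by the $x_1\leftrightarrow x_2$ symmetry), and this cannot vanish on $(0,\pi/2)$ by uniqueness of the interior critical point. Thus $\psi^{BC}(\gmm(s))\ge\psi^{BC}(P_2)$ for all $s\in[s_1,s_2]$, so the orbit $L_{\gmm(s)}$ encloses $L_{P_2}$ and hence stays at distance $\ge r/2$ from $x_c$; in particular $\gmm_t\subset\{|x-x_c|\ge r/2\}$ for all $t\ge 0$. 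Then \eqref{result_wind} gives $\mathrm{length}(\gmm_t)\ge\pi r\,|N_{x_c}[\gmm_t]|\ge rt/6-O(r)$, which exceeds $rt/12$ once $t\ge T_*$ for some absolute $T_*$. For $t\in[0,T_*]$ the bound $\mathrm{length}(\Phi^{BC}(t,\gmm))\ge|x_c-x_o|=\pi/\sqrt 2$ is immediate, since both endpoints of $\gmm$ are fixed by the flow. Combining both regimes gives $\mathrm{length}(\Phi^{BC}(t,\gmm))\ge c_0 t$ for all $t>0$ with an absolute constant $c_0>0$.

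The main obstacle is the first paragraph, specifically the control of the period function $T(P)$ at its two extremes. The elliptic end $T(P)\to T_c$ follows readily from the nondegenerate Hessian of $\psi^{BC}$ at $x_c$, but the divergence $T(P)\to\infty$ as $P\to x_o$ requires a careful quantitative treatment of the time an orbit spends near each of the four hyperbolic saddle corners while tracing a near-separatrix orbit, where the linearized equations $\dot x_1=-x_1/2$, $\dot x_2=x_2/2$ yield logarithmic blow-up of the transit time. Once these qualitative features of $\psi^{BC}$ are in hand, the rest of the argument is essentially bookkeeping around the identity \eqref{iden_wind} and the length-winding bound \eqref{result_wind}.
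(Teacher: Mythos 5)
Your overall mechanism --- differential rotation around the elliptic center $x_c$, turned into a winding bound via \eqref{iden_wind} and then into a length bound via \eqref{result_wind} after confining the curve away from $x_c$ by conservation of $\psi^{BC}$ --- is the same as the paper's. But there is a genuine gap exactly where you flag it: the claim $T(P_1)\ge 100\pi$ for $P_1$ near the corner is never established, and the local model you propose to establish it with is wrong. The corner of the Bahouri--Chemin flow is \emph{not} a nondegenerate saddle with $\psi^{BC}\approx\tfrac12 x_1x_2$: since the corner opening $\pi/2$ is resonant for $\lap\psi=1$ with zero Dirichlet data on the two edges, the stream function there behaves like $c\,x_1x_2\log\frac{1}{|x|}$, and $\nb u^{BC}$ diverges logarithmically at $x_o$ (this degeneracy is the whole point of the Bahouri--Chemin example). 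So the linearization $\dot x_1=-x_1/2$, $\dot x_2=x_2/2$ is not available and the ``classical logarithmic divergence'' of the period does not apply as stated; the divergence of $T(P)$ as $P\to x_o$ is still true, but proving it requires corner asymptotics of $\psi^{BC}$ that you have not supplied. You also lean repeatedly on global structure of $\psi^{BC}$ --- uniqueness of the interior critical point (for monotonicity along the diagonal) and nestedness/star-shapedness of the level sets (for ``each orbit is traversed counterclockwise'' and for the $O(1)$ residual in the angular displacement of $P_1$). These are true for the torsion function of a convex domain but are nontrivial inputs you assert rather than prove.

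The irony is that you already invoke the fact that repairs all of this: both endpoints of $\gmm$, in particular the corner $x_o$, are fixed points of $\Phi^{BC}$. The paper takes the slow endpoint to be $x_o$ itself, restricting $\gmm$ to the piece from $x_o$ up to the first point $x^*$ with $|x^*-x_c|=\dlt/2$; that endpoint's angular displacement is identically zero, so no period estimate near the separatrix, no uniqueness of critical points, and no global level-set structure are needed. The only inputs are the Taylor expansions \eqref{eq:u-bounds}--\eqref{eq:psi-bounds} at $x_c$, which give a finite period $T$ for $x^*$ and confine $\Phi^{BC}(t,\tilde{\gmm})$ outside $B(x_c,\dlt/4)$, together with the winding-to-length bound. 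If you set $s_1=0$, i.e.\ $P_1=x_o$, your argument collapses to the paper's and the gap disappears.
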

\begin{proof}
	Since $\Phi^{BC}(t,\cdot):[0,\pi]^2 \rightarrow [0,\pi]^2$ defines a diffeomorphism away from the points $(0,0)$, $(0,\pi)$, $(\pi,0)$, $(\pi,\pi)$, $\gmm_t:=\Phi^{BC}(t,\gmm)$ is a rectifiable curve satisfying $\gmm_t(0)=x_0$ and $\gmm_t(1)=x_c$ for any $t>0$. 
	
	Note that $\psi^{BC}$ is $C^\infty$-smooth and actually analytic away from $\partial ([0,\pi]^2)$. We consider the Taylor expansion of $\psi^{BC}$ around $x_c$: we have \begin{equation*}
	\begin{split}
	\psi^{BC}(x) = \psi^{BC}(x_c) + \frac{1}{4}|x-x_c|^2 + O(|x-x_c|^4). 
	\end{split}
	\end{equation*} In the above, the linear and cubic terms in $x-x_c$ do not appear due to the 4-fold symmetry of $\psi^{BC}$ around $x_c$; that is, $\psi^{BC}$ is invariant under the counterclockwise $\frac{\pi}{2}$-rotation around $x_c$. Therefore, we have the following expansion for $u^{BC}$: \begin{equation*}
	\begin{split}
	u^{BC}(x) = \frac{1}{2}\begin{pmatrix}
	-(x_2-x_{c,2})\\
	x_1-x_{c,1}
	\end{pmatrix} + O(|x-x_c|^3). 
	\end{split}
	\end{equation*} We shall fix an absolute constant $0<\dlt\le\frac{1}{4}$ such that for $|x-x_c|\le 2\dlt$, \begin{equation}\label{eq:u-bounds}
	\begin{split}
	\left| u^{BC}(x) - \frac{1}{2}\begin{pmatrix}
	-(x_2-x_{c,2})\\
	x_1-x_{c,1}
	\end{pmatrix} \right| \le \frac{1}{4}|x-x_c|^2, 
	\end{split}
	\end{equation} and \begin{equation}\label{eq:psi-bounds}
	\begin{split}
	\left| \psi^{BC}(x) -\psi^{BC}(x_c) - \frac{1}{4}|x-x_c|^2 \right| \le \frac{1}{16}|x-x_c|^3. 
	\end{split}
	\end{equation} 
	
	Assume that $x$ satisfies $\frac{1}{2}\dlt \le |x-x_c|$. Then, it follows from \eqref{eq:psi-bounds} that \begin{equation*}
	\begin{split}
	\frac{1}{4}\dlt \le |\Phi^{BC}(t,x)-x_c|
	\end{split}
	\end{equation*} for all $t>0$. Now we restrict $\gmm$ to $[0,a]\subset [0,1]$ such that $\widetilde{\gmm}:=\gmm|_{[0,a)}$ does not intersect the (open) ball $B(x_c,\frac{\dlt}{2})$ and $|\gmm(a)-x_c| = \frac{\dlt}{2}$. Then, $\Phi^{BC}(t,\widetilde{\gmm})$ does not intersect $B(x_c,\frac{\dlt}{4})$ for all $t>0$. Set $x^*=\gmm(a)$ and let $T>0$ be the period of $x^*$; that is, \begin{equation*}
	\begin{split}
	T = \min \{ t>0: \Phi^{BC}(t,x^*)=x^*  \}. 
	\end{split}
	\end{equation*} (The fact that $T<+\infty$ follows from \eqref{eq:u-bounds} and $|x_c-x^*| = \frac{1}{2}\dlt$.) Then, the curve $\Phi^{BC}(t,\widetilde{\gmm})$ winds around the ball $B(x_c,\frac{1}{4}\dlt)$ at least $\lfloor \frac{t}{T} \rfloor$ times, where $\lfloor \alp\rfloor$ denotes the largest integer not exceeding $\alp$. Hence \begin{equation*}
	\begin{split}
	\mathrm{length}(\Phi^{BC}(t,\gmm)) \ge \mathrm{length}(\Phi^{BC}(t,\widetilde{\gmm})) \ge \frac{\pi\dlt}{2}\left\lfloor \frac{t}{T} \right\rfloor. 
	\end{split}
	\end{equation*} Since
$\mathrm{length}(\Phi^{BC}(t,\gmm)) \ge	\sqrt 2 \cdot \pi/2$ for all $t>0$,
	this gives the desired lower bound. 
\end{proof}

\subsection{Proof of Theorem \ref{thm_finite}}

We take and fix $\dlt>0$ from the proof of Proposition \ref{prop:BC} and take $\Omg_0\subset (0,\pi)^2$ satisfying the assumptions of Theorem \ref{thm_finite}. Let us denote the solution (restricted to $(0,\pi)^2$) by $\omg(t,x)=\mathbf{1}_{\Omg_t}$. The corresponding velocity and flow is denoted respectively by $u(t,x)$ and $\Phi(t,x)$. Let us prove a few lemmas which work in this setup.
\begin{lemma}\label{lem:stab}
	We have that $$|u(t,x)-u^{BC}(x)|\le C \eps$$  for any $t>0$ and $x\in [0,\pi]^2$ where $C>0$ is a universal constant. 
\end{lemma}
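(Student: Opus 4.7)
The plan is to exploit the fact that, by construction, the solution $\omega(t,\cdot)$ differs from the Bahouri--Chemin steady state only on a small set, and then apply the standard $L^1\cap L^\infty$ bound for the two-dimensional Biot--Savart operator.

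First I would observe that on the fundamental square $(0,\pi)^2$ one has $\omg^{BC}\equiv 1$ while $\omg(t,\cdot)=\mathbf{1}_{\Omg_t}$, so
$$\omg^{BC}(x)-\omg(t,x)=\mathbf{1}_{(0,\pi)^2\setminus\Omg_t}(x),\qquad x\in(0,\pi)^2.$$
Since the Euler velocity $u$ is divergence-free, the flow $\Phi(t,\cdot)$ is area-preserving, hence $\mathrm{area}(\Omg_t)=\mathrm{area}(\Omg_0)$ and therefore
$$\mathrm{area}\bigl((0,\pi)^2\setminus\Omg_t\bigr)=\mathrm{area}\bigl((0,\pi)^2\setminus\Omg_0\bigr)\le\eps^{2}.$$
Because both $\omg(t,\cdot)$ and $\omg^{BC}$ are odd in $x_1$ and $x_2$, the identity above reflects to each of the four quadrants of $\bbT^2$ and yields
$$\bigl\|\omg(t,\cdot)-\omg^{BC}\bigr\|_{L^\infty(\bbT^2)}\le 2,\qquad \bigl\|\omg(t,\cdot)-\omg^{BC}\bigr\|_{L^{1}(\bbT^2)}\le 8\eps^{2}.$$

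Next, since $u-u^{BC}=\nb^{\perp}\lap^{-1}(\omg-\omg^{BC})$ and both vorticities have zero mean on $\bbT^2$ (by odd symmetry), the difference of velocities is given by the standard Biot--Savart kernel (periodic or whole-space, which differ by a smooth bounded correction). I would split the kernel at a scale $R>0$ to be chosen and bound
$$|u(t,x)-u^{BC}(x)|\le C\int_{|x-y|<R}\frac{|\omg(t,y)-\omg^{BC}(y)|}{|x-y|}\,dy+\frac{C}{R}\bigl\|\omg(t,\cdot)-\omg^{BC}\bigr\|_{L^{1}}.$$
Bounding the first integral by $C\|\omg(t,\cdot)-\omg^{BC}\|_{L^\infty}\cdot R$ and optimising in $R\sim\eps$ produces
$$|u(t,x)-u^{BC}(x)|\le C\bigl\|\omg(t,\cdot)-\omg^{BC}\bigr\|_{L^\infty}^{1/2}\bigl\|\omg(t,\cdot)-\omg^{BC}\bigr\|_{L^{1}}^{1/2}\le C\eps,$$
which is the desired bound, uniformly in $t$ and $x$.

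The only point that requires a little care is the choice of Biot--Savart kernel on $\bbT^{2}$; the standard way to handle this is to periodise $\omg(t,\cdot)-\omg^{BC}$ (which is legitimate because of the odd reflection) and to note that the periodic Green's function differs from $\frac{1}{2\pi}\log|x-y|$ by a function that is smooth on $\bbT^{2}$, so its contribution to $u-u^{BC}$ is controlled directly by $\|\omg(t,\cdot)-\omg^{BC}\|_{L^{1}}\le C\eps^{2}$. The rest is a routine splitting argument, so I do not expect any genuine obstacle.
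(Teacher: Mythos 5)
Your proof is correct and follows essentially the same route as the paper: observe that area preservation gives $\mathrm{area}([0,\pi]^2\setminus\Omg_t)\le\eps^2$, so $\nrm{\omg(t)-\omg^{BC}}_{L^1}\lesssim\eps^2$ while the $L^\infty$ norm is $O(1)$, and then apply the interpolation bound $\nrm{\nb\lap^{-1}f}_{L^\infty}\le C\nrm{f}_{L^\infty}^{1/2}\nrm{f}_{L^1}^{1/2}$. The only difference is that the paper cites this last estimate while you prove it by the standard kernel-splitting argument, which is fine.
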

\begin{proof}
	Since $u(t,\cdot)$ is divergence-free, $\mathrm{area}([0,\pi]^2\backslash\Omg_t) = \mathrm{area}([0,\pi]^2\backslash\Omg_0)\le\eps^2$ for all $t\ge0$. From the elementary estimate $\nrm{\nb \lap^{-1}f}_{L^\infty(\bbT^2)} \le C\nrm{f}_{L^\infty(\bbT^2)}^{\frac{1}{2}}\nrm{f}_{L^1(\bbT^2)}^{\frac{1}{2}}$ (see e.g. \cite{MP}), \begin{equation*}
	\begin{split}
	\nrm{u(t)-u^{BC}}_{L^\infty} \le C\nrm{\omg(t)-\omg^{BC}}_{L^\infty}^{\frac{1}{2}}\nrm{\omg(t)-\omg^{BC}}_{L^1}^{\frac{1}{2}}\le C\eps. 
	\end{split}
	\end{equation*}
\end{proof} From this stability lemma, we obtain a stability result for the trajectories associated with $\mathbf{1}_{\Omg_t}$: \begin{lemma}\label{cor:stab}
There exists an absolute constant $c_1>0$ such that \begin{equation}\label{eq:incl}
\begin{split}
[0,\pi]^2\backslash B(x_c,\frac{3\dlt}{2}) \subset \Phi(t, [0,\pi]^2\backslash B(x_c,\dlt) ) \subset [0,\pi]^2\backslash B(x_c,\frac{\dlt}{2})
\end{split}
\end{equation} for $0\le t \le c_1\eps^{-1}$. 
\end{lemma}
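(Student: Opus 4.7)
The plan is to exploit $\psi^{BC}$ as an approximate constant of motion along the perturbed flow. Since $u^{BC}=\nb^\perp\psi^{BC}$, for any trajectory one has
\[
\frac{d}{ds}\psi^{BC}(\Phi(s,x_0))=\nb\psi^{BC}(\Phi(s,x_0))\cdot (u-u^{BC})(s,\Phi(s,x_0)).
\]
By Lemma \ref{lem:stab} and the smoothness of $\psi^{BC}$ on $\overline{B(x_c,2\dlt)}\subset(0,\pi)^2$, one obtains a uniform bound $|\tfrac{d}{ds}\psi^{BC}\circ\Phi|\le C\eps$ as long as the trajectory lies inside $B(x_c,2\dlt)$.

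Next I would quantify the separation of $\psi^{BC}$ on concentric spheres around $x_c$. The estimate \eqref{eq:psi-bounds}, applied with $|x-x_c|\le 2\dlt\le 1/2$, gives
\[
\tfrac{7}{32}|x-x_c|^2\;\le\;\psi^{BC}(x)-\psi^{BC}(x_c)\;\le\;\tfrac{9}{32}|x-x_c|^2,
\]
so the three spheres $\{|x-x_c|=\tfrac{\dlt}{2}\}$, $\{|x-x_c|=\dlt\}$, $\{|x-x_c|=\tfrac{3\dlt}{2}\}$ carry pairwise disjoint intervals of $\psi^{BC}$-values, separated by universal gaps of size at least $\tfrac{19}{128}\dlt^2$ and $\tfrac{27}{128}\dlt^2$ respectively.

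The second inclusion then follows by contradiction. Pick $x_0$ with $|x_0-x_c|\ge\dlt$ and suppose $s_0\in[0,c_1\eps^{-1}]$ were the first time at which $|\Phi(s_0,x_0)-x_c|=\dlt/2$. By continuity, let $s_1\in[0,s_0)$ be the last time with $|\Phi(s_1,x_0)-x_c|=\dlt$; the extremality of $s_0$ and $s_1$ pins $\Phi(s,x_0)$ for $s\in(s_1,s_0)$ inside $B(x_c,\dlt)\subset B(x_c,2\dlt)$, since any escape would force an additional crossing of $\{|x-x_c|=\dlt\}$. Integrating the ODE for $\psi^{BC}\circ\Phi$ on $[s_1,s_0]$ and inserting the gap of the previous step yields
\[
\tfrac{19}{128}\dlt^2\;\le\;\psi^{BC}(\Phi(s_1,x_0))-\psi^{BC}(\Phi(s_0,x_0))\;\le\;C\eps s_0,
\]
forcing $s_0\ge c\dlt^2/\eps$. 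Choosing $c_1$ strictly smaller than $c\dlt^2$ contradicts $s_0\le c_1/\eps$.

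The first inclusion is equivalent, via area preservation and bijectivity of $\Phi(t,\cdot)$ on $[0,\pi]^2$ (guaranteed by the preserved odd symmetry), to $\Phi(t,B(x_c,\dlt))\subset B(x_c,\tfrac{3\dlt}{2})$. The argument is symmetric: a first-exit time $s_0$ of $B(x_c,\tfrac{3\dlt}{2})$ is preceded by a last crossing $s_1$ of $\{|x-x_c|=\dlt\}$, in $(s_1,s_0)$ the trajectory lies in $B(x_c,\tfrac{3\dlt}{2})\setminus B(x_c,\dlt)\subset B(x_c,2\dlt)$, and the gap $\tfrac{27}{128}\dlt^2$ produces the same quantitative lower bound on $s_0$. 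The only slightly delicate point in the whole argument is the bookkeeping that keeps $\Phi(s,x_0)$ inside $B(x_c,2\dlt)$ during $(s_1,s_0)$, where the Taylor expansion \eqref{eq:psi-bounds} is valid; everything else reduces to one-line applications of Lemma \ref{lem:stab} and integration.
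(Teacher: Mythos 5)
Your proof is correct and follows essentially the same route as the paper: both treat $\psi^{BC}$ as an approximate invariant of the perturbed flow via Lemma \ref{lem:stab} and use the Taylor expansion \eqref{eq:psi-bounds} to separate the values of $\psi^{BC}$ on the spheres of radii $\frac{\dlt}{2}$, $\dlt$, $\frac{3\dlt}{2}$. The only difference is bookkeeping: the paper integrates the identity from time $0$ and splits into cases according to whether $x$ starts inside $\overline{B(x_c,2\dlt)}$, while you integrate only between the last crossing of $\{|x-x_c|=\dlt\}$ and the first crossing of the target sphere, which avoids the case split.
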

\begin{proof}
	We compute  
 \begin{equation*}
	\begin{split}
	\frac{d}{dt} \psi^{BC}(\Phi(t,x)) = u(t,\Phi(t,x))\cdot\nb\psi^{BC}(\Phi(t,x)) = \left(u(t,\Phi(t,x))-u^{BC}(\Phi(t,x))\right)\cdot\nb\psi^{BC}(\Phi(t,x)),
	\end{split}
	\end{equation*} 
	using stationarity of $\omg^{BC}$. Since   $|\nb \psi^{BC}|$  is uniformly bounded, using Lemma \ref{lem:stab}, we obtain that \begin{equation}\label{eq:con}
	\begin{split}
	\left|\psi^{BC}(\Phi(t,x))-\psi^{BC}(x)\right| \le \int_0^t \left| \frac{d}{ds} \psi^{BC}(\Phi(s,x)) \right| ds \le Ct\eps. 
 	\end{split}
	\end{equation}   
	Equipped with \eqref{eq:con}, let us proceed to prove the second inclusion of \eqref{eq:incl}. We consider two cases.
	
	\medskip
	
	\noindent Case (i):  $x\in  \overline{B(x_c,2\dlt)}\backslash B(x_c,\dlt)$.
	
	\medskip 
	
	\noindent In this case, we have that 
$$ \psi^{BC}(x)-\psi^{BC}(x_c)   \ge  
\frac{|x-x_c|^2}{4} - \frac{|x-x_c|^3}{16} \ge
\frac{\dlt^2}{4}-  \frac{\dlt^3}{2} $$ 
from \eqref{eq:psi-bounds}. 
On the other hand, if we assume in addition that $\Phi(t,x)\in B(x_c,\frac{\dlt}{2})$, this would require that
  $$ \psi^{BC}(x_c)- \psi^{BC}(\Phi(t,x))  \ge -\frac{|\Phi(t,x)-x_c|^2}{4} - \frac{|\Phi(t,x)-x_c|^3}{16}\ge -
 \frac{\dlt^2}{2\color{black}^2 \cdot 4}
  - \frac{\dlt^3}{2\color{black}^3\cdot 16},$$
 which gives (recall that $\delta \le \frac{1}{4}$)\begin{equation*}
	\begin{split}
	\psi^{BC}(x) -\psi^{BC}(\Phi(t,x))\ge
\frac{\dlt^2}{4}-  \frac{\dlt^3}{2} 	 -
 \frac{\dlt^2}{ 2\color{black}^2 \cdot 4}
  - \frac{\dlt^3}{ 2\color{black}^3\cdot 16}=
  \frac{\dlt^2}{4}(1-2\dlt -\frac{1}{ 2\color{black}^2}	 -\frac{\dlt}{ 2^5\color{black}})\ge \frac{\dlt^2}{ 32\color{black}}>0.
	\end{split}
	\end{equation*} 	
	 Taking $c_1>0$ small enough (depending only on $\dlt$ and $C$ from \eqref{eq:con}), this gives a contradiction to \eqref{eq:con} for $t\in [0,c_1\eps^{-1}]$, which shows that 
	$ \Phi(t, \overline{B(x_c,2\dlt)} \backslash B(x_c,\dlt) ) \subset [0,\pi]^2\backslash B(x_c,\frac{\dlt}{ 2\color{black}})$.

	\medskip
	
	\noindent Case (ii): $x\in [0,\pi]^2\backslash \overline{B(x_c,2\dlt)}$.
	
	\medskip 
	
	\noindent If $\Phi(t,x)\notin  {B(x_c, \dlt/2 )}$ for all $t\in [0,c_1\eps^{-1}]$, then there is nothing to prove. So we may assume  $|\Phi(t_0,x)-x_c|<\dlt/2 $  for some $t_0\in (0,c_1\eps^{-1}]$.
	 But then, by continuity of the flow, we must have $|\Phi(t_1,x)-x_c|=2\dlt $  for some $t_1\in(0,t_0)$. Thus, by replacing $x$ with 
$\Phi(t_1,x)$	  in \eqref{eq:con}, we obtain that 
$$\frac{\dlt^2}{ 32 }\leq C(t_1-t_0)\epsilon\leq 
Cc_1.$$ 
This is  a contradiction due to the choice of $c_1>0$. 
	 
	 \medskip

	 Therefore, the second inclusion in \eqref{eq:incl} is proved. The proof for the other inclusion can be done similarly.
\end{proof}
We now state a complementary lemma: \begin{lemma}\label{lem:stab2}
	We have that \begin{equation*}
	\begin{split}
	\Phi(t,B(x_o,\widetilde{\eps}))\subset B(x_o,\dlt), \quad t \in [0,c_1\eps^{-1}],
	\end{split}
	\end{equation*} where $\widetilde{\eps}:= (c_2\dlt)^{\exp((c_3\eps)^{-1})}$ with  
	some constants $c_2\in(0,1]$, $c_3>0$ 	
	depending only on $c_1$. 
\end{lemma}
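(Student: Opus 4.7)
My plan is to exploit the odd symmetry of the solution $\omg(t,\cdot)$---preserved by the Euler equation---together with the $L^\infty$ conservation $\|\omg(t)\|_{L^\infty}\le 1$ to obtain a log-Lipschitz bound on $u(t,\cdot)$ near $x_o$ with an absolute constant independent of $\eps$. This will give an Osgood-type differential inequality for the radial distance $r(t):=|\Phi(t,x)-x_o|$ which I then integrate via the Lyapunov function $L(t):=\log\log(e/r(t))$ to recover exactly the double-exponential escape bound built into the definition of $\tilde\eps$.

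The first step is to note that by odd symmetry of $\omg(t,\cdot)$ in both variables (propagated by the flow), $u(t,x_o)=0$ for all $t\ge 0$, and that the standard log-Lipschitz estimate $\|u(t,\cdot)\|_{LL}\le C_0\|\omg(t)\|_{L^\infty}\le C_0$ on $\bbT^2$ holds with an absolute constant $C_0$ uniform in $t$ and $\eps$. Combining these yields
\[
|u(t,y)| \;\le\; C_0\,|y-x_o|\log\!\left(e/|y-x_o|\right), \qquad |y-x_o|\le 1/2.
\]
The second step applies this to the ODE for $r(t)$: as long as $r(t)\le 1/2$,
\[
\dot r(t) \;\le\; |u(t,\Phi(t,x))| \;\le\; C_0\, r(t)\log(e/r(t)),
\]
so $\dot L(t)=-\dot r(t)/[r(t)\log(e/r(t))]\ge -C_0$ and $L(t)\ge L(0)-C_0 t$. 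For $x\in B(x_o,\tilde\eps)$, $L(0)\ge \log\log(e/\tilde\eps)$. Setting $c_3:=1/(C_0 c_1)$ and choosing $c_2\in(0,1]$ absorbing the $\log\log(e/\delta)$ term, the defining relation $\tilde\eps=(c_2\dlt)^{\exp(1/(c_3\eps))}$ rearranges to $\log\log(e/\tilde\eps)\ge \log\log(e/\delta)+C_0 c_1/\eps$, whence $L(t)\ge \log\log(e/\delta)$, i.e.\ $r(t)\le\delta$, for all $t\in[0,c_1/\eps]$. The bootstrap $r(t)\le\delta\le 1/2$ (needed to justify the log-Lipschitz bound above) closes by continuity from $r(0)=\tilde\eps\ll\delta$.

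The main obstacle---and the conceptual departure from the proof of Lemma~\ref{cor:stab}---is obtaining the velocity bound of the first step with a constant independent of $\eps$. A naive approach using Lemma~\ref{lem:stab} would write $|u(t,y)|\le|u^{BC}(y)|+C\eps\le C|y-x_o|\log(e/|y-x_o|)+C\eps$, and the additive $C\eps$ term would degrade the ODE to $\dot r\le Cr\log(e/r)+C\eps$, producing only an escape time of order $\log\log(1/\eps)$, much smaller than $c_1/\eps$ for small $\eps$. The symmetry-based argument avoids this degradation precisely because the perturbation $u(t,\cdot)-u^{BC}$ is itself log-Lipschitz with absolute constant (coming from $\|\omg(t)-\omg^{BC}\|_{L^\infty}\le 2$) and vanishes at $x_o$, so its contribution near $x_o$ is already of the same order $|y-x_o|\log(e/|y-x_o|)$ as $|u^{BC}(y)|$ and can be absorbed into $C_0$.
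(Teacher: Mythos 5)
Your argument is correct and is essentially the paper's proof with its black box unpacked: the paper simply invokes the two-trajectory estimate $|\Phi(t,x)-\Phi(t,x')|\le C|x-x'|^{\exp(-Ct\Vert\omega_0\Vert_{L^\infty})}$ (citing Lemma 2 of the reference [EJ]) with $x'=x_o$, a fixed point of the flow by odd symmetry, and that estimate is itself proved by exactly your log-Lipschitz/Osgood integration with the absolute constant coming from $\Vert\omega(t)\Vert_{L^\infty}\le 1$. Your closing remark about avoiding the additive $C\eps$ error from Lemma \ref{lem:stab} is a fair observation, but the mechanism you use --- $u(t,x_o)=0$ plus a uniform log-Lipschitz bound yielding double-exponential separation control --- is the same one underlying the paper's citation, so this is the same proof rather than a different route.
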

\begin{proof}
	The conclusion follows directly from the trajectory estimate for 2D Euler (see \cite[Lemma 2]{EJ}) \begin{equation*}
	\begin{split}
	|\Phi(t,x)-\Phi(t,x')|\le C|x-x'|^{\exp(-Ct\nrm{\omg_0}_{L^\infty})}
	\end{split}
	\end{equation*} upon taking $x' = x_o$. 
\end{proof}

\begin{proof}[Proof of Theorem \ref{thm_finite}]
	From  the assumptions on $\Omg_0$, we   take an injective 
 piecewise differentiable  
	curve $\gmm_0:[0,1]\rightarrow(0,\pi)^2$ satisfying \begin{itemize}
		\item $|\gmm_0(0)-x_o| = \widetilde{\eps}$ and $|\gmm_0(1) - x_c| = \dlt,$
		\item $\gmm_0([0,1]) \subset (0,\pi)^2\backslash B(x_c,\dlt)$ and $\gmm_0([0,1]) \subset \partial\Omg_0$,
		\item $|N_{x_c}[\gamma_0]|\leq 1.$
	\end{itemize} We set $x_o^* =\gmm_0(0)$, $x_c^* = \gmm_0(1)$, and $I=[0,c_1\eps^{-1}]$. 
 In the following,   $t$ will be assumed to lie on the interval $I$.	
	From Lemma \ref{cor:stab} and Lemma \ref{lem:stab2}, it follows that 
	\begin{itemize}
		\item $|\Phi(t,x_o^*)-x_o|\le\dlt$ and $\frac{\dlt}{2} \le |\Phi(t,x_c^*)-x_c| \le \frac{3\dlt}{2}$, and 
		\item $\Phi(t,\gmm_0)=:\gmm_t$ does not intersect $B(x_c,\frac{\dlt}{2})$. 
	\end{itemize} 

By \eqref{iden_wind}, we observe  that the winding number $N_{x_c}[\gamma_{t}]$ of the curve $\gmm_{t}$ around $x_c$ satisfies \begin{equation*}
	\begin{split}
		N_{x_c}[\gamma_{t}]+1\ge  N_{x_c}[\gamma_{t}] - N_{x_c}[\gamma_0] \ge  \frac{1}{2\pi\cdot(3/2)\dlt}\int_0^{t} u(t',\Phi(t',x_c^*)) \cdot 
\frac{\left(\Phi(t',x_c^*)-x_c\right)^\perp}{|\Phi(t',x_c^*)-x_c|}	
		 dt'  -c.
	\end{split}
	\end{equation*} 
	The  constant $c$ in the above appears from a potential loss of the winding number due to motion of the other endpoint $\Phi(t,x_o^*)$, which is bounded above by 1 since $\Phi(t,x_o^*)$ cannot escape the ball $B(x_o,\dlt)$.

On the other hand, by using 
\eqref{eq:u-bounds} and Lemma \ref{lem:stab},
we have
 \begin{equation*}
	\begin{split}
	&\int_0^{t} u(t',\Phi(t',x_c^*)) \cdot \frac{\left(\Phi(t',x_c^*)-x_c\right)^\perp}{|\Phi(t',x_c^*)-x_c|}	  dt' 
	 \ge \int_0^{t} \frac 1 2   {\left(\Phi(t',x_c^*)-x_c\right)^\perp} \cdot \frac{\left(\Phi(t',x_c^*)-x_c\right)^\perp}{|\Phi(t',x_c^*)-x_c|}	  dt' \\ &\quad 
	 -
	 \int_0^{t} \Big|\frac 1 2  {\left(\Phi(t',x_c^*)-x_c\right)^\perp} -u^{BC}(\Phi(t',x_c^*))\Big|dt'
	  -
	 \int_0^{t} \Big| u^{BC}(\Phi(t',x_c^*))-u(t',\Phi(t',x_c^*))\Big|dt'\\
	 &\quad \ge \frac 1 2 \int_0^t {|\Phi(t',x_c^*)-x_c|}	 dt' - \frac 1 4 
	  \int_0^t {|\Phi(t',x_c^*)-x_c|^2}	 dt'
	   - C\eps
	  \int_0^t1 dt'\\
	 &\quad 
	     \ge c_*t\cdot(\delta-C\eps).
	\end{split}
	\end{equation*} 
	 We shall proceed assuming $\eps>0$ small enough so that $C\eps <\frac{\dlt}{2}$. 
		\color{black}	 Thus we get
\begin{equation*}
	\begin{split}
		N_{x_c}[\gamma_{t}]+2\ge    c_*' t,\quad t\in I.
	\end{split}
	\end{equation*} 		
	By \eqref{result_wind}, we conclude that \begin{equation*}
	\begin{split}
	\mathrm{length}(\partial\Omg_{t}) \ge \mathrm{length}(\gmm_{t}) \ge \max\left\{\tilde{c}, (c_*' t-2)\cdot\left(2\pi \cdot \frac{\dlt}{2}\right) \right\},\quad t\in I,
	\end{split} 
	\end{equation*} where the constant
$$\tilde{c}:=\dist\left(B(x_c, (3/2)\dlt), B(x_0,\dlt)\right)>0$$ is independent of $\eps>0$. Hence we get $\mathrm{length}( \partial \Omg_t ) \ge c_0 t
$ for $t \in I$ where $c_0$ is a universal constant.
\end{proof}

\section{The whole space case}\label{sec:R2}

\begin{proof}[Proof of Theorem \ref{thm_finite_whole}]
We denote $D=B(x_o,1)\subset\mathbb{R}^2$. Recall that $u_{D} = K * \mathbf{1}_D$  satisfies  
\begin{equation}\label{vel_disk}
u_{D,rad}\equiv 0,\quad u_{D,tan}(x)=
\begin{cases} \frac{|x|}{2} &\quad\mbox{if } x\in D,\\
\frac{1}{2|x|} &\quad\mbox{otherwise }  
\end{cases} \end{equation}
for the decomposition $u =u_{rad}\frac{x}{|x|}+u_{tan}\frac{x^\perp}{|x|}.$ From the assumption of the theorem, we   take  
 an injective piecewise differentiable  
  curve $\gmm_0:[0,1]\to (\overline{B(x_o,2)}\setminus D) \cap \partial\Omega_0$  satisfying $\gmm_0(0)=x_1$ with $|x_1|=1,$ and  $\gmm_0(1)=x_2$ with $|x_2|=2$ and
$$|N_{x_0}[\gmm_0]|\leq 1.$$  
Let's denote
$|\Omega_0 \sd D |=\dlt$. 
   By the stability result of \cite{SiVe} (also see \cite{wp} for bounded domains), we have
$$|\Omega_t \sd D |\le C \sqrt{\delta},\quad t\geq 0,$$
where 
$\mathbf{1}_{\Omega_t}$ is the 
solution of \eqref{eq:Euler} for the initial data $\mathbf{1}_{\Omega_0}$.
As 
 in  the proof of Lemma \ref{lem:stab}, 
we get, for $u(t) = K* \mathbf{1}_{\Omega_t}$, \begin{equation*}
	\begin{split}
	\nrm{u(t) - u_{D}}_{L^\infty} \le C_1 \delta^{1/4},\quad t\geq 0,
	\end{split}
	\end{equation*}  where $C_1>0$ is a universal constant  (e.g. see \cite[Lemma 2.2]{CJ_winding}). We then observe that
	$$  |u_{rad}(t,x)|\le C_1\delta^{1/4},\quad t\geq0,\quad x\in\mathbb{R}^2$$ follows from \eqref{vel_disk}. 	 
Since $|x_1(0)|=1, |x_2(0)|=2$, we have, for $$t\in [0, \frac 1 4 / {( C_1 \delta^{1/4}})]=:I,$$
we have	$x_1(t)\in \overline{B_{5/4}}\setminus B_{3/4},$ and
$x_2(t)\in \overline{B_{9/4}}\setminus B_{7/4}$, where $x_i(t) := \Phi(t,x_i)$ for $i = 1,2$. In the following,   $t$ will be assumed to lie on the interval $I$. We observe that for any $x \in \bbR^2\backslash D,$  
	\begin{equation*}\Phi(t,x) \in \bbR^2\backslash B(x_o,{\frac 3 4}).\end{equation*}
Thus, we obtain from \eqref{vel_disk} that
$$ u_{tan}(t, x_1(t))\ge \min\{\frac{3/4}{2},\frac{1}{2\cdot 5/4}\}- C_1\delta^{1/4}=\frac {21}{56} - C_1\delta^{1/4},\qquad u_{tan}(t, x_2(t))\le \frac{1}{2\cdot 7/4} + C_1\delta^{1/4}=\frac{16}{ 56} + C_1\delta^{1/4}.$$  
Assume $\delta>0$ small enough so that $C_1\delta^{1/4}\leq \frac 1 {56}.$ Then 
$$ u_{tan}(t, x_1(t))\ge   \frac {20}{56},\quad u_{tan}(t, x_2(t))\le \frac{17}{ 56}.$$  
Thus we have the following   estimate  on the winding number $N_{x_0}[\gmm_t]$ for the curve $\gmm_t:=\Phi(t,\gmm_0)$ around $x_o$ by \eqref{iden_wind}:
\begin{equation*}
\begin{split}
N_{x_0}[\gmm_t]  + 1&\ge 
N_{x_0}[\gmm_t]  -N_{x_0}[\gmm_0]
\ge
 \frac{1}{2\pi\cdot(5/4)}\int_0^t  u_{tan}(t', x_1(t'))     dt' 
-\frac{1}{2\pi\cdot(7/4)}\int_0^t u_{tan}(t', x_2(t'))   dt'  \\
 &\ge \frac{20}{56}\cdot \frac{t}{2\pi \cdot (5/4)} - \frac{17}{56}\cdot \frac{t}{2\pi \cdot (7/4) }=  c_1 t,
\end{split}
\end{equation*}  where $c_1>0$ is an absolute constant.  This gives 
 \begin{equation*}
 \mathrm{length}( \partial \Omg_t ) \ge \mathrm{length}(\Phi(t,l)) \ge  \max\left\{\left(\frac 7 4 - \frac 5 4\right), (c_1t-1) \cdot \left(2\pi\cdot \frac 3 4\right) \right\}.
 \end{equation*} Hence, there is an absolute constant $c_0>0$ such that $\mathrm{length}( \partial \Omg_t ) \ge c_0 t
$ for $t \in I$ and the proof is done by setting 
$c_1=(4C_1)^{-4}$. 
\end{proof}

{\Large \section*{Acknowledgement}}

\noindent KC has been supported by   the National Research Foundation of Korea (NRF-2018R1D1A1B07043065) and by the Research Fund (1.200085.01) of UNIST(Ulsan National Institute of Science \& Technology).
IJ has been supported  by a KIAS Individual Grant MG066202 at Korea Institute for Advanced Study, the Science Fellowship of POSCO TJ Park Foundation, and the National Research Foundation of Korea grant 2019R1F1A1058486. 

\ \\

\bibliography{patch11}

\end{document}